\newcommand{\eee}{{\rm e}}
\DeclareMathOperator{\1}{\mathbbm{1}}
\newcommand{\mmp}{\mathbb{P}}
\newcommand{\dod}{\overset{{\rm d}}{\to}}
\newcommand{\me}{\mathbb{E}}
\newcommand{\mr}{\mathbb{R}}
\newcommand{\mn}{\mathbb{N}}
\newtheorem{thm}{Theorem}[section]
\newtheorem{lemma}[thm]{Lemma}
\newtheorem{assertion}[thm]{Proposition}
\theoremstyle{definition}
\theoremstyle{remark}
\begin{document}
\title{Functional limit theorems for random Lebesgue-Stieltjes convolutions}
\date{\today}

\author{Alexander Iksanov\footnote{ Faculty of Computer Science and Cybernetics, Taras Shevchenko National University of Kyiv, 01033 Kyiv, Ukraine, e-mail:
iksan@univ.kiev.ua} ~~~and~~ Wissem Jedidi \footnote{Department of Statistics and Operations Research, College of Science, King Saud University, P.O. Box 2455, Riyadh 11451, Saudi Arabia, e-mail: wjedidi@ksu.edu.sa} }

\maketitle

\begin{abstract}
\noindent
We prove joint functional limit theorems in the Skorokhod space equipped with the $J_1$-topology for successive Lebesgue-Stieltjes convolutions of nondecreasing stochastic processes with themselves. These convolutions arise naturally in coupled branching random walks, where the displacements of individuals relative to their mother's position are given by the underlying point process rather than its copy. Surprisingly, the numbers of individuals in the $j$th generation, with positions less than or equal to $t$, exhibit remarkably similar distributional behavior in both standard branching random walks and coupled branching random walks as $t$ tends to infinity.
\end{abstract}
\noindent Keywords: branching random walk, functional limit theorem, Lebesgue-Stieltjes convolution

\noindent 2010 Mathematics Subject Classification: 60F17, 60J80

\section{Introduction}\label{sect:intro}

Let $X:=(X(t))_{t\geq 0}$ be an almost surely (a.s.) nondecreasing and right-continuous stochastic process. For instance, $X$ may be a subordinator (an a.s.\ nondecreasing L\'{e}vy process), possibly a compound Poisson process with nonnegative jumps, or a locally finite counting process defined by $X(t):=\sum_{k\geq 1}\1_{\{T_k\leq t\}}$ for $t\geq 0$, where $(T_k)_{k\geq 1}$ is a sequence of nonnegative random variables, which is not necessarily monotone. One particular example appears in the situation where $(T_k)_{k\geq 1}$ is a globally perturbed random walk defined by
\begin{equation}\label{eq:pert}
T_1:=\eta_1\quad\text{and}\quad T_k:=\xi_1+\ldots+\xi_{k-1}+\eta_k,\quad k\geq 2.
\end{equation}
Here, $(\xi_k, \eta_k)_{k\geq 1}$ are independent copies of an $[0,\infty)\times [0,\infty)$-valued random vector $(\xi,\eta)$.

Put $X_1(t):=X(t)$ and, for $j\geq 2$, $$X_j(t):=\int_{[0,\,t]}X(t-y){\rm d}X_{j-1}(y)=\int_{[0,\,t]}X_{j-1}(t-y){\rm d}X(y),\quad t\geq 0.$$ The process $X_j$ is known as the {\it $j$-fold Lebesgue-Stieltjes convolution} of $X$ with itself, and is commonly denoted by $X^{\ast (j)}$. The article discusses the distributional convergence of $(X_1,\ldots, X_k)$ for any fixed $k$. 

A few words are in order about the motivation behind this work. First, the Lebesgue-Stieltjes convolution is a widely used operation in Probability and Analysis. We are unaware of any results concerning the distributional convergence of random convolutions, which is somewhat surprising. Secondly, and more importantly, such convolutions naturally arise in the definition of {\it coupled branching random walks}. Now we provide details and begin  by recalling what the standard (decoupled) branching random walks generated by the sequence $(T_k)_{k\geq 1}$ are. A population starts at time $0$ with a single individual, referred to as the ancestor. This ancestor produces offspring, which comprise the first generation of the population. The birth times of these offspring are given by the points $T_1$, $T_2, \ldots$ The first generation then gives rise to the second generation. The differences in birth times between individuals and their respective mothers are distributed according to copies of $(T_n)_{n \geq 1}$. Furthermore, these copies are independent for different mothers; this assumption, known as the {\it branching property}, validates the term `decoupled'. The second generation subsequently produces the third generation, and this pattern continues with each individual in these successive generations operating independently of others.

For $t\geq 0$ and $j\geq 1$, let $Y_j(t)$ denote the number of the $j$th generation individuals with birth times $\leq t$. Then $Y_1(t)=\sum_{k\geq 1}\1_{\{T_k\leq t\}}$ and, for $j\geq 2$,
\begin{equation}\label{eq:branch}
Y_j(t)=\sum_{k\geq 1}Y_{j-1}^{(k)}(t-T_k)\1_{\{T_k\leq t\}},\quad t\geq 0,
\end{equation}
where $Y_{j-1}^{(k)}(t)$ is the number of the $j$th generation individuals who are descendants of the first-generation individual with birth time $T_k$, and whose birth times fall in $[T_k, T_k +t]$. By the branching property, $(Y^{(1)}_{j-1}(t))_{t\geq 0}$, $(Y^{(2)}_{j-1}(t))_{t\geq 0},\ldots$ are independent copies of $(Y_{j-1}(t))_{t\geq 0}$, which are also independent of $(T_k)_{k\geq 1}$. The latter ensures that if $t\mapsto \me [Y_1(t)]$ is finite for each $t\geq 0$, then, for $j\geq 2$, the function $t\mapsto \me [Y_j(t)]$ is the $j$-fold Lebesgue-Stieltjes convolution of $t\mapsto \me [Y_1(t)]$ with itself.

To define a coupled branching random walk, we use the same sequence $(T_k)_{k\geq 1}$ for all individuals rather than its independent copies. Then, for $t\geq 0$ and $j\geq 1$, the variable $X_j(t)$ is exactly the number of the $j$th generation individuals with birth times $\leq t$. Of course, the function $t\mapsto \me [X_j(t)]$ provided it is finite for $t\geq 0$ is no longer the $j$-fold Lebesgue-Stieltjes convolution of $t\mapsto \me [X_1(t)]$. It can be checked that if $(T_k)_{k\geq 1}$ is a standard random walk, that is, the globally perturbed random walk with $\xi=\eta$, then $\me [X_2(t)]=\me [X(t/2)]+2\int_{[0,\,t]}\me [X(t-y)]{\rm d}\me [X(y/2)]$ for $t\geq 0$.

\section{Main results}\label{sect:res}

We make general assumptions regarding the distributional convergence of $X$ in two situations: (a) when centering is needed and (b) when no centering is needed. We then prove the joint distributional convergence of $(X_1,\ldots, X_k)$ for any $k\geq 2$.

We let $D$ denote the Skorokhod space of c\`{a}dl\`{a}g functions defined on $[0,\infty)$. The space $D$ is endowed with the $J_1$-topology. A comprehensive information about the $J_1$-topology can be found in \cite{Billingsley:1999} and \cite{Ethier+Kurtz:2005}. We shall write $\Rightarrow$ for weak convergence in a function space. Recall that a positive function $f$ is called {\it regularly varying} at $\infty$ of index $\alpha\in\mr$, if $\lim_{t\to\infty}(f(\lambda t)/f(t))=\lambda^\alpha$ for each $\lambda>0$. A function is called {\it slowly varying} at $\infty$ if it is regularly varying of index $0$.

Here is our assumption in the situation where a centering is needed: the limit relation holds
\begin{equation}\label{eq:1}
\Big(\frac{X(ut)-b(ut)}{a(t)}\Big)_{u\geq 0}~\Rightarrow~ (W(u))_{u\geq 0},\quad t\to\infty
\end{equation}
on $D$, where

\noindent - $a$ is a positive function satisfying $\lim_{t\to\infty}a(t)=\infty$;

\noindent - $b$ is a positive nondecreasing right-continuous function which varies regularly at $\infty$ of index $\alpha>0$ and satisfies $\lim_{t\to\infty}(b(t)/a(t))=\infty$;

\noindent - $(W(u))_{u\geq 0}$ is an a.s.\ locally H\"{o}lder continuous stochastic process, that is, $$|W(x)-W(y)|\leq V_T |x-y|^\gamma$$ for all $T>0$, $x,y\in [0,T]$, an a.s.\ finite positive random variable $V_T$ and some $\gamma\in (0,1]$. Observe that necessarily $W(0)=0$ a.s.

Put $W_1:=W$ and, for $j\geq 2$, $$W_j(u):=j\int_0^u W(u-y){\rm d}y^{\alpha (j-1)},\quad u\geq 0.$$ It can be checked that $W_j$ is a.s.\ locally H\"{o}lder continuous with exponent $\min(1, \gamma+\alpha(j-1))$.
\begin{thm}\label{thm:main1}
Suppose \eqref{eq:1}. Then
\begin{equation}\label{eq:3}
\Big(\Big(\frac{X_j(ut)-b_j(ut)}{a(t)b_{j-1}(t)}\Big)_{u\geq 0}\Big)_{j\geq 1}~\Rightarrow~ (W_j(u))_{u\geq 0})_{j\geq 1},\quad t\to\infty
\end{equation}
in the product $J_1$-topology on $D^\mn$, where $b_0(t)=1$, $b_1(t)=b(t)$ for $t>0$ and, for $j\geq 2$, $b_j=b^{\ast (j)}$, the $j$-fold Lebesgue-Stieltjes convolution of $b$ with itself.
\end{thm}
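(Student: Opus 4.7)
The plan is to introduce $\tilde X(t):=X(t)-b(t)$ and expand the $j$-fold Lebesgue--Stieltjes convolution by bilinearity:
$$
X_j - b_j \;=\; (b+\tilde X)^{\ast j}-b^{\ast j}\;=\; j\, b_{j-1}\ast \tilde X \;+\; \sum_{k=2}^{j}\binom{j}{k}\,b^{\ast(j-k)}\ast \tilde X^{\ast k}.
$$
Under the normalization $a(t)b_{j-1}(t)$, the linear term $j\, b_{j-1}\ast\tilde X$ should carry the limit $W_j$, while the $k\geq 2$ remainders should vanish: heuristically $\tilde X^{\ast k}$ is of order $a(t)^{k}$ (times a bounded factor), so the remainder is of order $a(t)^k b(t)^{j-k}$, strictly smaller than the main term by the factor $(a(t)/b(t))^{k-1}\to 0$, afforded by the assumption $b(t)/a(t)\to\infty$.

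For the main term, substitute $y=tv$ and use that $b_{j-1}$ is regularly varying of index $\alpha(j-1)$:
$$
\frac{j\,(b_{j-1}\ast\tilde X)(ut)}{a(t)\,b_{j-1}(t)} \;=\; j\int_{[0,u]}\frac{\tilde X(t(u-v))}{a(t)}\,d_v\!\left(\frac{b_{j-1}(tv)}{b_{j-1}(t)}\right).
$$
By Skorokhod representation one may pass to a probability space on which $\tilde X(t\cdot)/a(t)\to W$ almost surely in $D$; since $W$ is continuous (by local H\"older regularity), this upgrades to uniform convergence on compact intervals. The uniform convergence theorem for regularly varying functions, combined with monotonicity and continuity of the limit, yields $b_{j-1}(tv)/b_{j-1}(t)\to v^{\alpha(j-1)}$ uniformly on compact $v$-intervals. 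Since the limiting measure $dv^{\alpha(j-1)}$ has no atoms and the limiting integrand $W(u-v)$ is continuous, the Lebesgue--Stieltjes integral passes to the limit uniformly on compact $u$-intervals, giving
$$
\frac{j\,(b_{j-1}\ast\tilde X)(ut)}{a(t)\,b_{j-1}(t)}\;\longrightarrow\; j\int_0^u W(u-v)\,dv^{\alpha(j-1)} \;=\; W_j(u).
$$
Joint convergence over $j\ge 1$ is then essentially automatic: all main terms arise from a single continuous map $D\to D^{\mn}$ applied to the common process $\tilde X(t\cdot)/a(t)$, so the continuous mapping theorem delivers weak convergence in the product $J_1$-topology.

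In my view the principal technical obstacle is controlling the remainder $\sum_{k\ge 2}\binom{j}{k}b^{\ast(j-k)}\ast\tilde X^{\ast k}$. The naive estimate $|\tilde X^{\ast 2}(ut)|\le \|\tilde X\|_{[0,ut]}\cdot\text{TV}(\tilde X)(ut)\le O_P(a(t))\cdot O_P(b(t))=O_P(a(t)b(t))$ only matches the main term rather than being strictly smaller. One must therefore exhibit cancellation giving $\tilde X^{\ast k}(ut) = O_P(a(t)^k)$ uniformly on compact $u$-intervals, and here the structure of $\tilde X$ as a difference of two nondecreasing functions plays a role. A natural approach is to work with the pathwise identity $\tilde X^{\ast 2}(ut)=a(t)^2\int_0^u Y_t(u-v)\,dY_t(v)$, $Y_t:=\tilde X(t\cdot)/a(t)$, and bound this integral using the regularity of $W$ transferred to $Y_t$ via Skorokhod coupling (e.g.\ a Young-type integration bound when the H\"older exponents cooperate); higher powers $k\ge 3$ are then handled inductively. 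Alternatively, one can proceed by induction on $j$ via the recursive identity $X_j-b_j=b\ast\tilde X_{j-1}+\tilde X\ast b_{j-1}+\tilde X\ast\tilde X_{j-1}$ and perform the analogous cancellation analysis on the cross term.
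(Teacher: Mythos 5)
There is a genuine gap, and it sits exactly where you locate it: the remainder $\sum_{k\geq 2}\binom{j}{k}\,b^{\ast(j-k)}\ast\tilde X^{\ast k}$. Your main-term analysis is fine (it is essentially the same Skorokhod-coupling-plus-weak-convergence-of-measures argument the paper uses, and the limit $j\int_0^u W(u-v)\,{\rm d}v^{\alpha(j-1)}=W_j(u)$ is correct), but the bound $\tilde X^{\ast k}(ut)=O_P(a(t)^k)$ is asserted, not proved, and the tools you propose do not reach it. Young-type integration requires H\"{o}lder regularity of the \emph{integrator at finite $t$}, whereas $Y_t=\tilde X(t\cdot)/a(t)$ is a difference of two nondecreasing functions each of total variation of order $b(t)/a(t)\to\infty$ on compacts, and (in the motivating counting-process examples) has jumps; only the limit $W$ is H\"{o}lder. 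Nor can one fall back on second-moment computations: Theorem \ref{thm:main1} assumes nothing beyond the functional convergence \eqref{eq:1} (no moments, no independence structure), so there is no general handle on ${\rm Var}[\tilde X^{\ast 2}(t)]$. The required cancellation is real (it is visible on the Laplace--Stieltjes side, where $\widehat{\tilde X^{\ast 2}}=(\varphi-\psi)^2=O_P(a(t)^2)$, as in Section \ref{sect:disc}), but the paper itself points out that the Laplace--Stieltjes functional is not $J_1$-continuous, so this does not transfer to a pathwise, locally uniform estimate without further argument. As written, your proof establishes convergence of the linear term only.

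The paper sidesteps the problem entirely with a different decomposition. Writing $\tilde X=X-b$ and $\tilde X_k=X_k-b_k$, it uses
\begin{equation*}
X_{k+1}-b_{k+1}\;=\;\tilde X\ast X_k\;+\;\tilde X_k\ast b ,
\end{equation*}
i.e.\ it groups your cross term $\tilde X\ast\tilde X_k$ together with $\tilde X\ast b_k$ into the single term $\tilde X\ast X_k$. The point is that both integrators, ${\rm d}X_k$ and ${\rm d}b$, are \emph{positive} measures which, after normalization by $b_k(t)$ and $b(t)$, converge weakly to the continuous measures ${\rm d}y^{\alpha k}$ and ${\rm d}y^{\alpha}$ (the former in probability, by the induction hypothesis and Lemma \ref{lem:reg}). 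A single lemma on convergence of $\int x_t\,{\rm d}\nu_t$ (Lemma \ref{lem:conv}), applied after Skorokhod coupling, then gives the finite-dimensional limits with no remainder to control; the constant $\lim_t b_{k-1}(t)b(t)/b_k(t)$ recombines the two contributions into $W_{k+1}=(k+1)\int_0^uW(u-y)\,{\rm d}y^{\alpha k}$. Tightness is then obtained from the local H\"{o}lder continuity of $W$ and $W_k$, not from the continuous mapping theorem alone (the convolution functionals here depend on $t$, so ``essentially automatic'' is too quick). If you want to salvage your route, the fix is precisely this regrouping: never integrate against the signed measure ${\rm d}\tilde X$ of exploding total variation.
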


Now we state the assumption in the case where no centering is needed: there exists a positive diverging function $c$ and an a.s.\ continuous stochastic process $(Z(u))_{u\geq 0}$ such that
\begin{equation}\label{eq:100}
\Big(\frac{X(ut)}{c(t)}\Big)_{u\geq 0}~\Rightarrow~ (Z(u))_{u\geq 0},\quad t\to\infty
\end{equation}
on $D$.
\begin{thm}\label{thm:main2}
Suppose \eqref{eq:100}. Then
\begin{equation*} 
\Big(\Big(\frac{X_j(ut)}{(c(t))^j}\Big)_{u\geq 0}\Big)_{j\geq 1}~\Rightarrow~ (Z^{\ast(j)}(u))_{u\geq 0})_{j\geq 1},\quad t\to\infty
\end{equation*}
in the product $J_1$-topology on $D^\mn$.
\end{thm}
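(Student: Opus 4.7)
The plan is to prove by induction on $j$ the joint convergence
$$\bigl((X_i(ut)/c(t)^i)_{u\geq 0}\bigr)_{i=1,\ldots,j}~\Rightarrow~\bigl((Z^{\ast(i)}(u))_{u\geq 0}\bigr)_{i=1,\ldots,j}$$
in $D^j$ under the product $J_1$-topology, from which the theorem follows at once since convergence in the product topology on $D^\mn$ is equivalent to convergence of every finite projection. The base case $j=1$ is the hypothesis \eqref{eq:100}. For the inductive step from $j-1$ to $j$, the change of variable $y=tv$ yields
$$\frac{X_j(ut)}{c(t)^j} = \int_{[0,u]} \frac{X(t(u-v))}{c(t)}\, d\!\left(\frac{X_{j-1}(tv)}{c(t)^{j-1}}\right),$$
exhibiting the new coordinate as the Lebesgue-Stieltjes convolution of the two rescaled processes $X/c(t)$ and $X_{j-1}/c(t)^{j-1}$, whose joint limit is already controlled by the induction hypothesis (since $X=X_1$). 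The task therefore reduces to proving that the convolution operation is continuous on the relevant subset of $D\times D$ and then invoking the continuous mapping theorem.

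A preliminary observation is that each $Z^{\ast(j)}$ is a.s.\ continuous and nondecreasing, by an easy induction: $Z$ is continuous by assumption and nondecreasing (as the $J_1$-limit of nondecreasing processes), so the induced Stieltjes measure $dZ$ is diffuse, and the convolution of a continuous nondecreasing function with a diffuse nondecreasing measure is itself continuous. Since $J_1$-convergence of nondecreasing c\`adl\`ag functions to a \emph{continuous} limit coincides with locally uniform convergence, the Skorokhod representation theorem allows me to realise the induction hypothesis almost surely on some probability space, so that both $X(ut)/c(t)\to Z(u)$ and $X_{j-1}(ut)/c(t)^{j-1}\to Z^{\ast(j-1)}(u)$ a.s.\ uniformly in $u$ on every compact subinterval of $[0,\infty)$.

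The heart of the argument is the following continuity lemma: if $f_n,g_n$ are nondecreasing right-continuous functions on $[0,\infty)$ with $f_n\to f$ and $g_n\to g$ locally uniformly, and if $f,g$ are continuous, then $f_n\ast g_n\to f\ast g$ locally uniformly. Fix $T>0$ and split
$$(f_n\ast g_n)(u)-(f\ast g)(u) = \int_{[0,u]}(f_n-f)(u-v)\,dg_n(v)+\int_{[0,u]}f(u-v)\,d(g_n-g)(v).$$
The first summand is bounded uniformly in $u\in[0,T]$ by $\sup_{w\leq T}|f_n(w)-f(w)|\cdot g_n(T)$, which vanishes. For the second I would approximate $f$ on $[0,T]$ uniformly to within $\epsilon$ by a step function $\tilde f$; the integral of $\tilde f(u-\cdot)$ against $d(g_n-g)$ reduces to a finite linear combination of evaluations $(g_n-g)(u-\tau_i)$, whose supremum over $u\leq T$ tends to zero by locally uniform convergence of $g_n$ to $g$, while the approximation error is bounded uniformly by $\epsilon(g_n(T)+g(T))$. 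Letting $n\to\infty$ and then $\epsilon\to 0$ yields the claim.

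I expect the main obstacle to lie precisely in the uniformity in $u$ of the convergence of $\int_{[0,u]}f(u-v)\,d(g_n-g)(v)$: pointwise convergence at each fixed $u$ is a straightforward consequence of weak convergence of Stieltjes measures (portmanteau, using continuity of $f(u-\cdot)$), but upgrading this to uniformity in $u$ forces either the step-function sandwich above or an integration-by-parts manoeuvre that exploits continuity of $f$ to transfer the total variation onto a fixed finite quantity. Once this continuity property for the Lebesgue-Stieltjes convolution is secured, combining it with the Skorokhod representation and the continuous mapping theorem completes the inductive step and closes the proof.
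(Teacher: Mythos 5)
Your proposal is correct and follows essentially the same route as the paper: induction on $j$, the convolution identity expressing $X_j$ through $X$ and $X_{j-1}$, the Skorokhod representation theorem, and the observation that convergence of nondecreasing functions to a continuous limit is locally uniform. The only minor difference is that you establish a uniform-in-$u$ convergence lemma for the Lebesgue--Stieltjes convolution directly via step-function approximation, whereas the paper first reduces to finite-dimensional distributions (tightness being automatic for monotone processes with a continuous limit) and then invokes its pointwise Lemma \ref{lem:conv} on integrals against weakly convergent measures.
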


Recall that the Laplace-Stieltjes transform of a Lebesgue-Stieltjes convolution is equal to the product of the Laplace-Stieltjes transforms of the convolved functions. Therefore, it is tempting to establish counterparts of Theorems \ref{thm:main1} and \ref{thm:main2} in terms of Laplace-Stieltjes transforms and then return to the original theorems. However, as we explain at the end of Section \ref{sect:disc}, this approach may fail to work.

The a.s.\ asymptotic behavior of $X_j$ can be easily deduced when $j$ is fixed. To justify the claim, we provide just one result. As usual, $x(t)\sim y(t)$ as $t\to\infty$ means that $\lim_{t\to\infty}(x(t)/y(t))=1$.
\begin{assertion}\label{prop:almsure}
Assume that $X(t)\sim U\eee^{\beta t} t^\alpha \ell(t)$ a.s.\ as $t\to\infty$, where $\alpha>0$, $\beta\geq 0$, $U$ is a positive random variable and $\ell$ is a function slowly varying at $\infty$. If $\beta>0$, then, for $j\geq 2$, as $t\to\infty$, $$X_j(t)~\sim~\frac{(\Gamma(1+\alpha))^j}{\Gamma((1+\alpha)j)}\beta^{j-1}U^j \eee^{\beta t}t^{(1+\alpha)j-1}(\ell(t))^j\quad \text{{\rm a.s.}},$$ where $\Gamma$ is the Euler gamma-function. If $\beta=0$, then, for $j\geq 2$, as $t\to\infty$, $$X_j(t)~\sim~\frac{(\Gamma(1+\alpha))^j}{\Gamma(1+\alpha j)}U^j t^{\alpha j}(\ell(t))^j\quad \text{{\rm a.s.}}$$
\end{assertion}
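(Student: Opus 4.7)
The plan is induction on $j$, with the base case $j=1$ being the hypothesis. For the inductive step we exploit $X_j(t)=\int_{[0,\,t]}X(t-y)\,{\rm d}X_{j-1}(y)$. On the event of full probability on which both the hypothesised asymptotic for $X$ and the inductive asymptotic for $X_{j-1}$ hold, the problem becomes purely deterministic: to compute the leading-order behaviour of a Lebesgue-Stieltjes convolution of two nondecreasing right-continuous functions with prescribed (possibly exponentially scaled) regularly varying asymptotics.

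The deterministic workhorse is the following convolution lemma: if $F$ and $G$ are nondecreasing right-continuous with $F(t)\sim t^a\ell_1(t)$ and $G(t)\sim t^b\ell_2(t)$ as $t\to\infty$, where $a,b>0$ and $\ell_1,\ell_2$ are slowly varying, then
$$\int_{[0,\,t]}F(t-y)\,{\rm d}G(y)\sim\frac{\Gamma(a+1)\,\Gamma(b+1)}{\Gamma(a+b+1)}\,t^{a+b}\ell_1(t)\ell_2(t),\quad t\to\infty.$$
This is obtained by the substitution $y=tu$: the rescaled integrand converges pointwise on $(0,1)$ to a constant multiple of the Beta kernel $(1-u)^a u^{b-1}$ by the uniform convergence theorem for slowly varying functions, and Potter bounds supply an integrable dominant near $u=0$ and $u=1$. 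The identity $b\,B(a+1,b)=\Gamma(a+1)\Gamma(b+1)/\Gamma(a+b+1)$ then yields the stated constant.

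For $\beta>0$, the exponentials combine into an overall $\eee^{\beta t}$ that factors out of the convolution; inside ${\rm d}X_{j-1}$, differentiation of the exponential factor produces the dominant contribution of order $\beta$ times the polynomial-slowly varying factor, while differentiation of the polynomial and slowly varying factors is of lower polynomial order and negligible. Applying the lemma with $a=\alpha$ and $b=(1+\alpha)(j-1)$ yields
$$X_j(t)\sim\beta^{j-1}U^j\,C_{j-1}\,B\bigl(\alpha+1,\,(1+\alpha)(j-1)\bigr)\,\eee^{\beta t}\,t^{(1+\alpha)j-1}\,(\ell(t))^j,$$
and the telescope $C_{j-1}\,B(\alpha+1,(1+\alpha)(j-1))=C_j$ with $C_j=(\Gamma(1+\alpha))^j/\Gamma((1+\alpha)j)$ closes the induction. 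For $\beta=0$ the lemma applies directly with $a=\alpha$ and $b=\alpha(j-1)$, and the analogous telescope identifies the constant as $(\Gamma(1+\alpha))^j/\Gamma(1+\alpha j)$.

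The chief technical obstacle lies in the convolution lemma itself — specifically, in the transition from the monotone function $G$, which is only asymptotically equivalent to a smooth regularly varying function, to a tractable approximation of the Stieltjes measure ${\rm d}G$. Since $G$ need not be absolutely continuous, a pointwise replacement of ${\rm d}G$ by its smooth surrogate is inadmissible; one has to work through Riemann-Stieltjes sums (or perform integration by parts to convert ${\rm d}G$ into a Lebesgue integral against ${\rm d}F$), apply the asymptotics of $G$ at partition endpoints, and invoke Karamata's integral theorem in tandem with Potter bounds to control the behaviour of the integrand near $y=0$ and $y=t$. All of the subsequent Beta function bookkeeping is, by comparison, routine.
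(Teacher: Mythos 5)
Your proposal is correct and shares the architecture of the paper's proof: a pathwise reduction to a deterministic statement about Lebesgue--Stieltjes convolutions of (exponentially tilted) regularly varying monotone functions, an induction on $j$, and the same Beta-function telescoping; the paper packages the two cases as Lemma \ref{lem:reg} ($\beta=0$) and Lemma \ref{lem:exp} ($\beta>0$). The genuine difference lies in how the deterministic convolution asymptotics are obtained. You prove a two-function lemma by the substitution $y=tu$, pointwise convergence to the Beta kernel and Potter-bound domination, and you correctly single out the obstacle that ${\rm d}G$ is a genuine Stieltjes measure with no density, proposing Riemann--Stieltjes sums or integration by parts to get around it. The paper sidesteps this entirely via Lemma \ref{lem:conv}: after rescaling to $[0,1]$, the integrand $f^{\ast(k)}((1-y)t)/f^{\ast(k)}(t)$ converges locally uniformly to $(1-y)^{\alpha k}$ (monotone functions converging to a continuous limit), the measures $\nu_t$ with $\nu_t([0,y])=f(yt)/f(t)$ converge weakly to the continuous measure with distribution function $y^\alpha$, and Lemma \ref{lem:conv} delivers convergence of the integrals with no Potter bounds or partition arguments at all. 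Likewise, your heuristic that ``differentiation of the exponential factor produces the dominant contribution of order $\beta$'' is made rigorous in the paper by tilting: setting $\hat f_\beta(t):=\int_{[0,\,t]}\eee^{-\beta y}{\rm d}f(y)$, integration by parts plus Karamata's theorem give $\hat f_\beta(t)\sim \frac{\beta A}{1+\alpha}t^{1+\alpha}\ell(t)$, which is exactly the index bump from $\alpha$ to $1+\alpha$ and the extra factor $\beta$ you predict, and the identity $\eee^{-\beta t}f^{\ast(k+1)}(t)=\int_{[0,\,t]}\eee^{-\beta(t-y)}f^{\ast(k)}(t-y)\,{\rm d}\hat f_\beta(y)$ reduces the case $\beta>0$ to the polynomial one. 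Your exponents, powers of $\beta$ and $U$, and the telescoping identity $C_{j-1}B(\alpha+1,(1+\alpha)(j-1))=C_j$ all check out. What the paper's route buys is the complete elimination of the step you identify as the chief technical obstacle; what yours buys is a stand-alone two-function convolution lemma, slightly more general than the self-convolution statements the paper actually needs.
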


\section{Discussion}\label{sect:disc}

We assume until further notice that $(T_n)_{n\geq 1}$ is a globally perturbed random walk as defined in \eqref{eq:pert} and that $X(t)=Y_1(t)=\sum_{n\geq 1}\1_{\{T_n\leq t\}}$ for $t\geq 0$.

Let $B:=(B(u))_{u\geq 0}$ be a standard Brownian motion and, for $q\geq 0$, put $B_q(u):=\int_0^u (u-y)^q{\rm d}B(y)$ for $u\geq 0$. The process $B_q:=(B_q(u))_{s\geq 0}$ is a centered Gaussian process, known in the literature as the {\it fractionally integrated Brownian motion} or the {\it Riemann–Liouville process}. It is clear that $B=B_0$. Also, integrating by parts one can check that $B_1(u)=\int_0^u B(y){\rm d}y$ and, for integer $q\geq 2$, $B_q(u)=q!\int_0^u \int_0^{u_2}\ldots\int_0^{u_q}B(y){\rm d}y{\rm d}u_q\ldots{\rm d}u_2$ for $u\geq 0$.

The following result can be found in Theorem 6 of \cite{Iksanov+Rashytov+Samoilenko:2023} under the additional assumption that $\me [\eta^a]<\infty$ for some $a>0$. It follows from Theorem 2.14 in \cite{Iksanov+Kondratenko:2025} that the additional assumption can be dispensed with.
\begin{assertion}\label{prop:sam}
Assume that $\sigma^2:={\rm Var} [\xi]\in (0,\infty)$. Then 	
$$\Big(\Big((j-1)!\frac{Y_j(ut)-\me [Y_j(ut)]}{(\mu^{-2j-1}\sigma^2 t^{2j-1})^{1/2}}\Big)_{u\geq 0}\Big)_{j\geq 1}~\Rightarrow~ ((B_{j-1}(u))_{u\geq 0})_{j\geq 1},\quad t\to\infty$$ in the product $J_1$-topology on $D^\mn$, where $\mu:=\me [\xi]<\infty$.
\end{assertion}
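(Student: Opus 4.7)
The strategy is induction on $j$. The base case $j=1$ is the classical functional CLT for the counting process of the globally perturbed random walk: under $\sigma^2\in(0,\infty)$,
\[\Big(\frac{Y_1(ut)-\me [Y_1(ut)]}{\sqrt{t}}\Big)_{u\geq 0}~\Rightarrow~\Big(\frac{\sigma}{\mu^{3/2}}B(u)\Big)_{u\geq 0},\quad t\to\infty,\]
on $D$, the perturbation $\eta$ having no effect on the Gaussian scale. For $j\geq 2$, conditioning on the first-generation birth times $(T_k)_{k\geq 1}$ and subtracting the conditional mean $\sum_k\me [Y_{j-1}(t-T_k)]\1_{\{T_k\leq t\}}$ from the branching decomposition \eqref{eq:branch} yields
\[Y_j(t)-\me [Y_j(t)]=I_j(t)+II_j(t),\]
where
\[I_j(t):=\sum_{k\geq 1}\bigl(Y_{j-1}^{(k)}(t-T_k)-\me [Y_{j-1}(t-T_k)]\bigr)\1_{\{T_k\leq t\}}\]
aggregates the subtree fluctuations conditional on the first generation, and
\[II_j(t):=\int_{[0,\,t]}\me [Y_{j-1}(t-s)]\,{\rm d}(Y_1(s)-\me [Y_1(s)])\]
propagates the first-generation fluctuation through the deterministic mean of the $(j-1)$st generation.

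The main contribution is $II_j$. Since $\me [Y_j]$ is the $j$-fold Lebesgue-Stieltjes convolution of $\me [Y_1]$ with itself and $\me [Y_1(s)]\sim s/\mu$ by the elementary renewal theorem, an easy induction gives $\me [Y_{j-1}(s)]\sim s^{j-1}/((j-1)!\,\mu^{j-1})$ as $s\to\infty$. Replacing $\me [Y_{j-1}(t-s)]$ by this equivalent and performing Stieltjes integration by parts, after the rescaling $s=vt$ I obtain
\[\frac{II_j(ut)}{t^{j-1/2}}~\approx~\frac{1}{(j-2)!\,\mu^{j-1}}\int_0^u(u-v)^{j-2}\,\frac{Y_1(vt)-\me [Y_1(vt)]}{\sqrt{t}}\,{\rm d}v.\]
The right-hand side is a continuous (in uniform topology) functional of the base-case process, so the continuous mapping theorem together with the identity $\int_0^u(u-v)^{j-2}B(v)\,{\rm d}v=B_{j-1}(u)/(j-1)$, immediate from the definition of $B_{j-1}$ by integration by parts, yields the limit $(\sigma/((j-1)!\,\mu^{(2j+1)/2}))B_{j-1}(u)$. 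Multiplying by $(j-1)!$ and normalizing by $\sigma\mu^{-(2j+1)/2}$ produces the desired $B_{j-1}(u)$.

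The error $I_j$ is negligible. Conditional on $(T_k)_{k\geq 1}$ the summands are centered and independent, hence
\[\me [I_j(t)^2]=\int_{[0,\,t]}{\rm Var}[Y_{j-1}(t-s)]\,{\rm d}\me [Y_1(s)]=O(t^{2j-2})\]
by the inductive hypothesis ${\rm Var}[Y_{j-1}(s)]=O(s^{2j-3})$, one power of $t$ below the target scale $t^{2j-1}$; the analogous bound on increments yields tightness in $D$. The inductive hypothesis itself closes because the cross term $\me [I_j\cdot II_j]$ vanishes by the tower property, so ${\rm Var}[Y_j(t)]=\me[I_j(t)^2]+\me[II_j(t)^2]=O(t^{2j-1})$. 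Joint convergence across $j$ is automatic because every rescaled main term $II_j$ is a continuous functional of the same rescaled process $(Y_1(vt)-\me [Y_1(vt)])/\sqrt{t}$, so the continuous mapping theorem transfers the base-case convergence to the joint vector $(B_{j-1}(\cdot))_{j\geq 1}$; $J_1$-convergence on $D^\mn$ reduces to uniform-on-compacts convergence because the limits are continuous.

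The main technical obstacle, in my view, is the uniform replacement of $\me [Y_{j-1}(t-s)]$ by its asymptotic equivalent inside the Stieltjes integral defining $II_j$: the approximation degrades as $s\uparrow ut$, so the integration domain must be split into a bulk region $\{s\leq (1-\varepsilon)ut\}$, where the replacement is uniform and the continuous-mapping argument goes through, and a boundary region $\{(1-\varepsilon)ut<s\leq ut\}$, whose contribution must be controlled directly by a second-moment bound and then sent to $0$ by letting $\varepsilon\downarrow 0$. Dispensing with the moment assumption $\me [\eta^a]<\infty$ of \cite{Iksanov+Rashytov+Samoilenko:2023}, as in \cite{Iksanov+Kondratenko:2025}, requires a correspondingly more delicate control of the contribution of the perturbation $\eta$ to the small-time behavior of $\me [Y_1]$.
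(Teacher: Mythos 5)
The paper does not prove Proposition \ref{prop:sam} at all: it is imported verbatim from Theorem 6 of \cite{Iksanov+Rashytov+Samoilenko:2023}, with the moment condition on $\eta$ removed via Theorem 2.14 of \cite{Iksanov+Kondratenko:2025}. So there is no in-paper proof to match your argument against; the closest thing is the informal explanation in Section \ref{sect:disc}, which for $j=2$ uses exactly your decomposition of $Y_j-\me[Y_j]$ into the conditionally centered ``martingale'' term $I_j$ and the ``shot noise'' term $II_j$ driven by the first generation, with the first term killed by a variance estimate. Your sketch is therefore the right architecture, and the computations I checked (the constant $\sigma\mu^{-(2j+1)/2}/(j-1)!$, the identity $\int_0^u(u-v)^{j-2}B(v)\,{\rm d}v=B_{j-1}(u)/(j-1)$, the orders $\me[I_j(t)^2]=O(t^{2j-2})$ versus $t^{2j-1}$, and the vanishing of the cross term) are all consistent with the stated normalization.

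Two remarks on where the real work hides. First, your base case is not ``classical'': under only ${\rm Var}[\xi]\in(0,\infty)$ and \emph{no} integrability assumption on $\eta$, the functional CLT for $Y_1(t)=\sum_{k\geq 1}\1_{\{T_k\leq t\}}$ with $T_k=S_{k-1}+\eta_k$ requires showing that $\sum_k\big(\1_{\{\eta_k\leq t-S_{k-1}\}}-\mmp\{\eta\leq t-S_{k-1}\}\big)\1_{\{S_{k-1}\leq t\}}$ is $o_P(\sqrt t)$ without moments of $\eta$; this is precisely the content of the cited Theorem 2.14 of \cite{Iksanov+Kondratenko:2025}, so your proof is only as complete as that citation. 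Second, the step you single out as the main obstacle --- replacing $\me[Y_{j-1}(t-s)]$ by $(t-s)^{j-1}/((j-1)!\,\mu^{j-1})$ inside the Stieltjes integral --- does not need a bulk/boundary splitting: integrate by parts to write $II_j(ut)=h(0)(Y_1-m)(ut)+\int_{(0,\,u]}(Y_1-m)((u-v)t)\,{\rm d}_v h(vt)$ with $h=\me[Y_{j-1}]$, note that $(Y_1-m)(\cdot\,t)/\sqrt t$ converges to a continuous limit while the measures ${\rm d}_v(h(vt)/t^{j-1})$ converge weakly to ${\rm d}_v\big(v^{j-1}/((j-1)!\mu^{j-1})\big)$ by the renewal theorem and Lemma \ref{lem:reg}, and apply Lemma \ref{lem:conv} --- the same device the paper uses to prove Theorem \ref{thm:main1}. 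Finally, the bound $\me[II_j(t)^2]=O(t^{2j-1})$ needed to close your variance induction follows most cleanly from Minkowski's integral inequality applied to $\int_{(0,\,t]}(Y_1-m)(t-y)\,{\rm d}h(y)$ together with $\sup_t{\rm Var}[Y_1(t)]/t<\infty$; you should state that last input explicitly, since it is again a fact about perturbed random walks that does not hold for free.
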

Now, under the same assumptions, we give a specialization of Theorem \ref{thm:main1}. First, we derive relation \eqref{eq:1} from the convergence of the first coordinate in Proposition \ref{prop:sam}. It holds with $a(t)=(\mu^{-3}\sigma^2t)^{1/2}$ and
\begin{multline*}
b(t)=\me [X(t)]=\int_{[0,\,t]}\mmp\{\eta\leq t-y\}{\rm d}\Big(1+\sum_{n\geq 1}\mmp\{\xi_1+\ldots+\xi_n\leq y\}\Big)\\~\sim~ \sum_{n\geq 1}\mmp\{\xi_1+\ldots+\xi_n\leq t\}~\sim~ \mu^{-1}t,\quad t\to\infty,
\end{multline*}
where the last asymptotic relation is secured by the elementary renewal theorem. Thus, $\alpha=1$. By Lemma \ref{lem:reg}, $b_j(t)\sim t^j/(\mu^j j!) $ as $t\to\infty$. Also, $b_j(t)=\me [Y_j(t)]$ for $t\geq 0$. Finally, $W=B$, a standard Brownian motion, and for $j\geq 1$, $W_j=jB_{j-1}$. Summarizing, we obtain the following result.
\begin{thm}\label{thm:new}
Assume that $\sigma^2={\rm Var} [\xi]\in (0,\infty)$. Then 	
$$\Big(\Big(\frac{(j-1)!}{j}\frac{X_j(ut)-\me [Y_j(ut)]}{(\mu^{-2j-1}\sigma^2 t^{2j-1})^{1/2}}\Big)_{u\geq 0}\Big)_{j\geq 1}~\Rightarrow~ ((B_{j-1}(u))_{u\geq 0})_{j\geq 1},\quad t\to\infty$$ in the product $J_1$-topology on $D^\mn$. 
\end{thm}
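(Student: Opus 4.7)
The plan is to view Theorem \ref{thm:new} as a direct specialization of Theorem \ref{thm:main1}, with the work split between verifying the hypotheses and identifying the limit process. First I would check that \eqref{eq:1} holds with $X = Y_1$, $a(t) = (\mu^{-3}\sigma^2 t)^{1/2}$, $b(t) = \me [X(t)]$, and $W = B$. The convergence itself is exactly the $j = 1$ projection of Proposition \ref{prop:sam}. The structural requirements on $b$ follow from the elementary renewal theorem computation already recorded in Section \ref{sect:disc}: $b$ is nondecreasing, right-continuous, satisfies $b(t) \sim \mu^{-1} t$ (hence is regularly varying at infinity of index $\alpha = 1$), and $b(t)/a(t) \to \infty$. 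Brownian motion is a.s.\ locally H\"{o}lder continuous of any order $\gamma \in (0, 1/2)$, so the hypotheses of Theorem \ref{thm:main1} are in force.

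Next I would identify $W_j$ explicitly for this choice of parameters. For $j \geq 2$ and $\alpha = 1$, the definition unfolds as $W_j(u) = j(j-1) \int_0^u B(u-y) y^{j-2} {\rm d}y = j(j-1) \int_0^u B(z)(u-z)^{j-2} {\rm d}z$ after the change of variable $z = u - y$. Applying the deterministic-integrand integration-by-parts identity $\int_0^u f(y) {\rm d}B(y) = f(u) B(u) - \int_0^u f'(y) B(y) {\rm d}y$ to $f(y) = (u-y)^{j-1}$ (note $f(u) = 0$) gives $B_{j-1}(u) = (j-1) \int_0^u (u-z)^{j-2} B(z) {\rm d}z$, whence $W_j = j B_{j-1}$ for $j \geq 2$. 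The relation extends trivially to $j = 1$ via the convention $B_0 = B$.

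Finally I would rewrite the normalization from Theorem \ref{thm:main1} in the form claimed. Invoking Lemma \ref{lem:reg} to get $b_{j-1}(t) \sim t^{j-1}/(\mu^{j-1}(j-1)!)$ and combining with $a(t) = (\mu^{-3}\sigma^2 t)^{1/2}$, a routine rearrangement of exponents yields $a(t) b_{j-1}(t) \sim (\mu^{-2j-1} \sigma^2 t^{2j-1})^{1/2}/(j-1)!$. Since $b_j(ut) = \me [Y_j(ut)]$ by the convolution identity for the expected generation sizes recalled in Section \ref{sect:intro}, dividing the conclusion of Theorem \ref{thm:main1} by $j$ and applying a standard converging-together argument to swap $a(t) b_{j-1}(t)$ for its asymptotic equivalent delivers Theorem \ref{thm:new}. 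The only genuinely substantive step is the identification $W_j = j B_{j-1}$; the remainder is a matter of checking definitions and rearranging constants.
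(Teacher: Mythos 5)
Your proposal is correct and follows essentially the same route as the paper: specialize Theorem \ref{thm:main1} with $a(t)=(\mu^{-3}\sigma^2 t)^{1/2}$, $b(t)=\me[X(t)]\sim\mu^{-1}t$ (so $\alpha=1$), identify $W_j=jB_{j-1}$, and use Lemma \ref{lem:reg} together with $b_j=\me[Y_j]$ to rewrite the normalization. The integration-by-parts verification of $W_j=jB_{j-1}$ and the constant bookkeeping are exactly what the paper leaves implicit.
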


The distributional limit theorems for $X_j$ and $Y_j$ are identical up to the deterministic factor $1/j$. This fact may seem surprising when interpreted in terms of the decoupled and coupled branching random walks. Indeed, randomness in the limit in each generation is represented by the same Brownian motion, which arises already in the first generation. In particular, the intrinsic independence of the decoupled branching random walk induced by the branching property is not seen in the limit. Now we explain this phenomenon informally, restricting our attention to the second generation for simplicity. Referring back to formula \eqref{eq:branch}, we represent the difference $Y_2-b_2$ as the sum of a `martingale' term and a `shot noise' term: $$Y_2(t)-b_2(t)=\sum_{k\geq 1}(Y_1^{(k)}(t-T_k)-b(t-T_k))\1_{\{T_k\leq t\}}+\Big(\sum_{k\geq 1}b(t-T_k)\1_{\{T_k\leq t\}}-b_2(t)\Big)$$ for $t\geq 0$. The independence brought by the second-generation individuals is hidden in the first term, which asymptotically vanishes upon dividing by $a(t)b(t)$ (its variance is small in comparison to $(a(t)b(t))^2$). Thus, the distributional behavior of $Y_2-b_2$ is driven by the second term, which is a deterministic functional of $(T_n)_{n\geq 1}$.

Such a situation persists for general $(T_n)_{n\geq 1}$, for which $X(t)=\sum_{n\geq 1}\1_{\{T_n\leq t\}}$ satisfies \eqref{eq:1} and $v(t):={\rm Var}[X(t)]<\infty$ for all $t\geq 0$, whenever $$\lim_{t\to\infty}\frac{\int_{[0,\,t]}v(t-y){\rm d}m(y)}{(a(t)b(t))^2}=0,$$ where $m(t):=\me [X(t)]$.

In the setting of Theorem \ref{thm:main2}, that is, when the limit theorem involves no centering, the scaling limits for $X_j$ and $Y_j$ are drastically different. According to Theorem \ref{thm:main2}, the scaling limit of $X_j$ is $Z^{\ast(j)}$. However, the scaling limit of $Y_j$ is necessarily of a different form. To exemplify, we give a result which follows from Theorem 5 and Section 6 in \cite{Braganets+Iksanov:2023}.
\begin{assertion}
Let $(T_n)_{n\geq 1}$ be as given in \eqref{eq:pert}. Assume that $\mmp\{\xi>t\}\sim t^{-\beta}\ell(t)$ as $t\to\infty$ for some $\beta\in (0,1)$ and some $\ell$ slowly varying at $\infty$. Then
\begin{multline*}
\Big(\big((\mmp\{\xi>ut\})^j Y_j(ut)\big)_{u\geq 0}\Big)_{j\geq 1}\\~\Rightarrow~ \Big(\Big(\frac{1}{(\Gamma(1-\beta))^{j-1}\Gamma(1+\beta(j-1))}\int_{[0,\,u]}\mathcal{S}^\leftarrow_\beta(u-y){\rm d}y^{\beta(j-1)}\Big)_{u\geq 0}\Big)_{j\geq 1},\quad t\to\infty
\end{multline*}
in the product $J_1$-topology on $D^\mn$, where $\mathcal{S}^\leftarrow_\beta(u):=\inf\{v\geq 0: \mathcal{S}_\beta(v)>u\}$ for $u\geq 0$, $(\mathcal{S}_\beta(v))_{v\geq 0}$ is a drift-free $\beta$-stable subordinator with $-\log \me [\eee^{-s\mathcal{S}_\beta(v)}]=\Gamma(1-\beta)vs^\beta$ for $s\geq 0$, and $\Gamma$ is the Euler gamma-function.
\end{assertion}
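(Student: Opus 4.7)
The plan is to proceed by induction on $j$, reducing everything to the functional convergence of the first-generation process $Y_1$. For the base case $j=1$, the heavy-tail assumption $\mmp\{\xi>t\}\sim t^{-\beta}\ell(t)$ gives the classical $J_1$-convergence of the rescaled partial-sum process $(\xi_1+\ldots+\xi_{\lfloor n t\rfloor})/b_n$ to the $\beta$-stable subordinator $\mathcal{S}_\beta$ with $b_n$ chosen so that $n\mmp\{\xi>b_n\}\to 1$. The perturbation $\eta$ is negligible at this scaling and the inverse map is continuous at strictly increasing c\`{a}dl\`{a}g limits, which yields
$$(\mmp\{\xi>ut\}\,Y_1(ut))_{u\geq 0}~\Rightarrow~(\mathcal{S}^\leftarrow_\beta(u))_{u\geq 0}$$
in the $J_1$-topology on $D$.

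For $j\geq 2$, the next step is to apply the branching recursion \eqref{eq:branch} and decompose, relative to $\mathcal{F}_1:=\sigma(T_k:k\geq 1)$,
$$Y_j(t)=\int_{[0,\,t]}b_{j-1}(t-y)\,{\rm d}Y_1(y)+\sum_{k\geq 1}\bigl(Y_{j-1}^{(k)}(t-T_k)-b_{j-1}(t-T_k)\bigr)\1_{\{T_k\leq t\}},$$
with $b_{j-1}(s):=\me [Y_{j-1}(s)]$. The second summand is a conditionally centred sum of independent copies of $Y_{j-1}-b_{j-1}$, whose conditional variance equals $\sum_{k\geq 1}v_{j-1}(t-T_k)\1_{\{T_k\leq t\}}$ for $v_{j-1}(s):={\rm Var}[Y_{j-1}(s)]$. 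A separately established recursive moment bound provides $v_{j-1}(s)=O\bigl((\mmp\{\xi>s\})^{-2(j-1)}\bigr)$; combined with $\mmp\{\xi>t\}Y_1(t)=O_\mmp(1)$ this makes the fluctuation of order $o\bigl((\mmp\{\xi>t\})^{-j}\bigr)$ uniformly on compacts in probability.

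The principal term will be handled by continuous mapping. Karamata's theorem together with the heavy-tail renewal theorem yields $(\mmp\{\xi>t\})^{j-1}b_{j-1}(ut)\to c_{j-1}u^{\beta(j-1)}$ uniformly on compact subsets of $(0,\infty)$, with $c_{j-1}:=1/\bigl((\Gamma(1-\beta))^{j-1}\Gamma(1+\beta(j-1))\bigr)$. Combining this deterministic convergence with the base case and the continuity of the Lebesgue-Stieltjes convolution $(f,g)\mapsto \int_0^{\cdot}f(\cdot-y)\,{\rm d}g(y)$ in the $J_1$-topology when the kernel $f$ is continuous, one arrives at
$$(\mmp\{\xi>t\})^j\int_{[0,\,ut]}b_{j-1}(ut-y)\,{\rm d}Y_1(y)~\Rightarrow~ c_{j-1}\int_{[0,\,u]}(u-y)^{\beta(j-1)}\,{\rm d}\mathcal{S}^\leftarrow_\beta(y),$$
which matches the stated limit after integration by parts. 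Joint convergence across $j$ is automatic, since every limit is a deterministic functional of the single process $\mathcal{S}^\leftarrow_\beta$ driving the first generation.

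The main obstacle I foresee lies in the variance control used in the fluctuation step: the inductive hypothesis is only a distributional convergence and does not by itself yield the $L^2$-bound $v_{j-1}(s)=O\bigl((\mmp\{\xi>s\})^{-2(j-1)}\bigr)$. Establishing it requires a separate recursive argument, for instance a renewal-type identity expressing $v_j$ in terms of $b$, $v_{j-1}$, and the renewal measure of $Y_1$, or a uniform-integrability argument for $\bigl((\mmp\{\xi>t\})^{j-1}Y_{j-1}(t)\bigr)^2$ via truncation. A secondary but manageable technical point is that the normalizer $\mmp\{\xi>ut\}$ rather than $\mmp\{\xi>t\}$ differs by $u^{-\beta}(1+o(1))$ only uniformly on compacts bounded away from $0$; the behaviour near $u=0$ is handled using $\mathcal{S}^\leftarrow_\beta(0)=Y_1(0)=0$ together with tightness at the origin.
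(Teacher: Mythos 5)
The paper contains no proof of this assertion: it is imported as a consequence of Theorem~5 and Section~6 of Braganets--Iksanov (2023), so there is nothing internal to compare against. Your roadmap is nonetheless essentially the natural argument, and it matches the heuristic the paper itself sketches in Section~\ref{sect:disc}: split $Y_j$ via \eqref{eq:branch} into the shot-noise term $\int_{[0,\,t]}b_{j-1}(t-y)\,{\rm d}Y_1(y)$ plus a conditionally centred fluctuation term, kill the latter by a conditional variance estimate, and push the former through a convolution-continuity lemma (the paper's Lemma~\ref{lem:conv}) using the regular variation of $b_{j-1}$ supplied by Lemma~\ref{lem:reg} and the infinite-mean renewal theorem. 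Your constant $c_{j-1}=1/((\Gamma(1-\beta))^{j-1}\Gamma(1+\beta(j-1)))$ is correct, the integration by parts does produce the displayed limit, and your honest identification of the recursive bound $v_{j-1}(s)=O((\mmp\{\xi>s\})^{-2(j-1)})$ as the real outstanding work is fair; that bound holds (for $j-1=1$ it is the classical second-moment estimate for an infinite-mean counting process) and your indicated renewal-type recursion is the standard way to get it. Note also that your scheme is not really an induction on the functional convergence: every $j\geq 2$ reduces directly to the $j=1$ case plus the deterministic asymptotics of $b_{j-1}$, which is precisely why joint convergence across $j$ is automatic.

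The one genuine error is your handling of the normalisation. As written, your base case $(\mmp\{\xi>ut\}\,Y_1(ut))_{u\geq 0}\Rightarrow(\mathcal{S}^\leftarrow_\beta(u))_{u\geq 0}$ is false: the classical result reads $\mmp\{\xi>t\}\,Y_1(ut)\Rightarrow\mathcal{S}^\leftarrow_\beta(u)$, with the normaliser evaluated at $t$, not $ut$. The two normalisers differ by the non-constant factor $\mmp\{\xi>ut\}/\mmp\{\xi>t\}\to u^{-\beta}$, so the corresponding limit processes differ by $u^{-\beta}$ (for general $j$, by $u^{-\beta j}$) on all of $(0,\infty)$ --- this is not a ``secondary technical point'' confined to a neighbourhood of $u=0$. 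A first-moment check settles which version the displayed limit belongs to: $(\mmp\{\xi>t\})^j\,\me[Y_j(ut)]\to u^{\beta j}/((\Gamma(1-\beta))^j\Gamma(1+\beta j))$, which equals the expectation of the stated limit, whereas $(\mmp\{\xi>ut\})^j\,\me[Y_j(ut)]$ tends to a constant in $u$. Your own computation of the principal term silently uses the $t$-only normaliser, so the write-up is internally inconsistent; fix the base case and carry $(\mmp\{\xi>t\})^j$ throughout (the $(\mmp\{\xi>ut\})^j$ appearing in the assertion's display can only be reconciled with its stated limit after multiplying that limit by $u^{-\beta j}$). A smaller gap: pointwise negligibility of the fluctuation term via Chebyshev does not immediately give negligibility uniformly on compacts; the cheap fix is to observe that $\sum_k Y^{(k)}_{j-1}(\cdot-T_k)\1_{\{T_k\leq\cdot\}}$ and $\sum_k b_{j-1}(\cdot-T_k)\1_{\{T_k\leq\cdot\}}$ are both nondecreasing and, after scaling, converge to the same continuous limit, whence their difference vanishes locally uniformly.
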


We close this section by justifying the claim made in Section \ref{sect:res} regarding the use of Laplace-Stieltjes transforms. Assuming that \eqref{eq:1} holds, we would like to deduce that
\begin{equation}\label{eq:lst}
\Big(\frac{\int_{[0,\,\infty)}\eee^{-uy/t}{\rm d}(X(y)-b(y))}{a(t)}\Big)_{u>0}~\Rightarrow~\Big(u\int_0^\infty \eee^{-uy}W(y){\rm d}y\Big)_{u>0},\quad t\to\infty
\end{equation}
on $D((0,\infty))$ (the set of c\`{a}dl\`{a}g functions defined on $(0,\infty)$). However, the Laplace-Stieltjes functional is not continuous in the $J_1$-topology, see p.~258 in \cite{Whitt:1972}. Thus, the implication does not follow automatically and requires a proof. Without going into details we state that \eqref{eq:1} entails \eqref{eq:lst} provided that $$\lim_{T\to\infty}\limsup_{t\to\infty}\mmp\Big\{\max_{u\in [c,\,d]}\Big|\int_T^\infty \eee^{-uy}(X(ty)-b(ty)){\rm d}y\Big|>\varepsilon a(t)\Big\}=0$$ for all $\varepsilon>0$ and all $c,d\in  (0,\infty)$, $c<d$. This is a sufficient condition, we do not claim it is necessary. If \eqref{eq:lst} holds, then it is not that hard to derive a counterpart of \eqref{eq:3}:
\begin{equation}\label{eq:lst2}
\Big(\Big(\frac{\int_{[0,\,\infty)}\eee^{-uy/t}{\rm d}(X_j(y)-b_j(y))}{a(t)b_{j-1}(t)}\Big)_{u>0}\Big)_{j\geq 1}~\Rightarrow~\Big(\Big(u\int_0^\infty \eee^{-uy}W_j(y){\rm d}y\Big)_{u>0}\Big)_{j\geq 1},\quad t\to\infty
\end{equation}
on $D((0,\infty))^\mn$. We only address the convergence of the second coordinate. Put $\varphi(s):=\int_{[0,\,\infty)}\eee^{-sy}{\rm d}X(y)$ and $\psi(t):=\int_{[0,\,\infty)}\eee^{-sy}{\rm d}b(y)$ for $s>0$. Write, for $u>0$ and large $t$,
\begin{multline*}
\frac{\int_{[0,\,\infty)}\eee^{-uy/t}{\rm d}(X_2(y)-b_2(y))}{a(t)b(t)}=\frac{(\varphi(u/t))^2-(\psi(u/t))^2}{a(t)b(t)}=\frac{a(t)}{b(t)}\Big(\frac{\varphi(u/t)-\psi(u/t)}{a(t)}\Big)^2\\+2\frac{\psi(u/t)}{b(t)}\frac{\varphi(u/t)-\psi(u/t)}{a(t)}.
\end{multline*}
Since $\lim_{t\to\infty}(a(t)/b(t))=0$ and \eqref{eq:lst} holds, we conclude that, for all $c,d\in (0,\infty)$, $c<d$, $$\lim_{t\to\infty}\frac{a(t)}{b(t)}\sup_{u\in [c,\,d]}\Big(\frac{\varphi(u/t)-\psi(u/t)}{a(t)}\Big)^2=0\quad\text{in probability.}$$ By Karamata's Tauberian theorem (Theorem 1.7.1 in \cite{BGT}), $$\lim_{t\to\infty}\frac{\psi(u/t)}{b(t)}=\Gamma(1+\alpha)u^{-\alpha}=\int_0^\infty \eee^{-ux}{\rm d}x^\alpha.$$ Furthermore, this convergence is uniform in $u\in [c,\,d]$. Hence,
\begin{multline*}
\Big(\frac{\int_{[0,\,\infty)}\eee^{-uy/t}{\rm d}(X_2(y)-b_2(y))}{a(t)b(t)}\Big)_{u>0}~\Rightarrow~\Big(2u\int_0^\infty \eee^{-ux}{\rm d}x^\alpha \int_0^\infty \eee^{-uy}W(y){\rm d}y\Big)_{u>0}\\=\Big(u\int_0^\infty \eee^{-uy}W_2(y){\rm d}y\Big)_{u>0},\quad t\to\infty
\end{multline*}
on $D((0,\infty))$. A final issue to mention is that it is not immediately clear how to deduce \eqref{eq:3} from \eqref{eq:lst2}.

\section{Proofs}

We state right away that Proposition \ref{prop:almsure} follows from Lemma \ref{lem:reg} in the case $\beta=0$ and from Lemma \ref{lem:exp} in the case $\beta>0$.

\subsection{Proof of Theorem \ref{thm:main1}}

Since each coordinate on the left-hand side of \eqref{eq:3} is a deterministic functional of $X$, it is enough to prove the distributional convergence of each coordinate. The joint distributional convergence then follows automatically.

We use mathematical induction. Assume that we have already proved that
\begin{equation}\label{eq:assump}
\Big(\frac{X_k(ut)-b_k(ut)}{a(t)b_{k-1}(t)}\Big)_{u\geq 0}~\Rightarrow~ (W_k(u))_{u\geq 0},\quad t\to\infty
\end{equation}
on $D$. To derive the analogous asymptotic relation with $k+1$ replacing $k$ we have to show the weak convergence of finite-dimensional distributions and tightness. Let $(t_n)_{n\geq 1}$ be any sequence of positive numbers satisfying $\lim_{n\to\infty}t_n=\infty$. In view of the Cram\'{e}r-Wold device, the former follows if we can show that, for all $m\geq 1$, any $0\leq u_1<\ldots<u_m<\infty$ and any $\lambda_1,\ldots$ $\lambda_m\in\mr$,
\begin{equation}\label{eq:findim2}
\sum_{i=1}^m \lambda_i \frac{X_{k+1}(u_it_n)-b_{k+1}(u_it_n)}{a(t_n)b_k(t_n)}~\dod~ \sum_{i=1}^m \lambda_i W_{k+1}(u_i), \quad n\to\infty,
\end{equation}
where $\dod$ denotes one-dimensional convergence in distribution. Here is a basic decomposition for what follows:
\begin{multline*}
X_{k+1}(ut)-b_{k+1}(ut)\\=\int_{[0,\,ut]}(X(ut-y)-b(ut-y)){\rm d}X_k(y)+\int_{[0,\,ut]}(X_k(ut-y)-b_k(ut-y)){\rm d}b(y)\\=\int_{[0,\,u]}(X((u-y)t)-b((u-y)t)){\rm d}_y X_k(yt)+\int_{[0,\,u]}(X_k((u-y)t)-b_k((u-y)t)){\rm d}_yb(yt),
\end{multline*}
where ${\rm d}_y$ denotes integration over $y$.

Relation \eqref{eq:assump} ensures that, for all $T>0$, $\lim_{t\to\infty}\sup_{y\in [0,\,T]}|X_k(yt)/b_k(t)- b_k(yt)/b_k(t)|=0$ in probability. By Lemma \ref{lem:reg}, $b_k$ is regularly varying at $\infty$ of index $\alpha k$. As a consequence, for all $T>0$, $\lim_{t\to\infty}\sup_{y\in [0,\,T]}|X_k(yt)/b_k(t)-y^{\alpha k}|=0$ in probability. For $i\geq 1$, denote by $\nu_i$ the measure defined by $\nu_i([0,\,y])=y^{\alpha i}$ for $y\geq 0$.  By the Skorokhod representation theorem, for each $n\geq 1$, there exists $X^\ast_{k,\,t_n}$, a version of $(X_k(t_n u))_{u\geq 0}$, such that the random measures $\nu^\ast_{k,\,t_n}$ defined by $\nu^\ast_{k,\,t_n}([0,\,y])=X^\ast_{k,\,t_n}(y)$ converge weakly to the continuous measure $\nu_k$ a.s. Also, on the same probability space there exist versions $\hat X_{1,\,t_n}$ and $\hat X_{k,\,t_n}$ of the first and the $k$th coordinate on the left-hand side of \eqref{eq:3}, with $t_n$ replacing $t$, which converge a.s. to $\hat W_1$ and $\hat W_k$, versions of $W$ and $W_k$, in the $J_1$-topology on $D$, as $n\to\infty$. Furthermore, for all $T>0$,
\begin{equation}\label{eq:cont}
\lim_{n\to\infty}\sup_{u\in [0,\,T]}|\hat X_{1,\,t_n}(u)-\hat W_1(u)|=0\quad \text{and}\quad \lim_{n\to\infty}\sup_{u\in [0,\,T]}|\hat X_{k,\,t_n}(u)-\hat W_k(u)|=0\quad\text{a.s.}
\end{equation}
because $W$ and $W_k$ are a.s.\ continuous, and so are $\hat W_1$ and $\hat W_k$. By assumption, $b$ varies regularly at $\infty$ of index $\alpha$. Hence, the measures $\nu_{1,\,t_n}$ 
defined by $\nu_{1,\,t_n}([0,\,y])=b(yt_n)/b(t_n)$ for $y>0$ converge weakly to the continuous measure $\nu_1$. Thus, by Lemma \ref{lem:conv},
\begin{multline*}
\lim_{n\to\infty}\sum_{i=1}^m\lambda_i \Big(\int_{[0,\,u_i]}\hat X_{1,\,t_n}(u_i-y){\rm d}\Big(\frac{X^\ast_{k,\,t_n}(y)}{b_k(t_n)}\Big)+\int_{[0,\,u_i]}\hat X_{k,\,t_n}(u_i-y){\rm d}_y\Big(\frac{b(yt_n)}{b(t_n)}\Big)\Big)\\=\sum_{i=1}^m\lambda_i\Big( \int_{[0,\,u_i]}\hat W_1(u_i-y){\rm d}y^{\alpha k}+ \int_{[0,\,u_i]}\hat W_k(u_i-y){\rm d}y^\alpha\Big) \quad\text{a.s.}
\end{multline*}
Now we conclude that
\begin{multline*}
\sum_{i=1}^m \lambda_i \frac{X_{k+1}(u_it_n)-b_{k+1}(u_it_n)}{a(t_n)b_k(t_n)}~\dod~\sum_{i=1}^m \lambda_i\Big(\int_{[0,\,u_i]} W(u_i-y){\rm d}y^{\alpha k}\\+\frac{\Gamma(1+\alpha k)}{\Gamma(1+\alpha)\Gamma(1+\alpha(k-1))} \int_{[0,\,u_i]} W_k(u_i-y){\rm d}y^\alpha\Big),\quad n\to\infty
\end{multline*}
having utilized $$\lim_{t\to\infty}\frac{b_{k-1}(t)b(t)}{b_k(t)}=\frac{\Gamma(1+\alpha k)}{\Gamma(1+\alpha)\Gamma(1+\alpha(k-1))},$$ see Lemma \ref{lem:reg}. Noting that $$\int_0^y (y-x)^{\alpha(k-1)}{\rm d}x^\alpha=\frac{\Gamma(1+\alpha)\Gamma(1+\alpha(k-1))}{\Gamma(1+\alpha k)}y^{\alpha k}$$ we infer
\begin{multline*}
\int_{[0,\,u]} W(u-y){\rm d}y^{\alpha k}+\frac{\Gamma(1+\alpha k)}{\Gamma(1+\alpha)\Gamma(1+\alpha(k-1))} \int_{[0,\,u]} W_k(u-y){\rm d}y^\alpha=\int_{[0,\,u]} W(u-y){\rm d}y^{\alpha k}\\+\frac{\Gamma(1+\alpha k)}{\Gamma(1+\alpha)\Gamma(1+\alpha(k-1))}k \int_{[0,\,u]} W(u-y){\rm d}\Big(\int_0^y (y-x)^{\alpha(k-1)}{\rm d}x^\alpha\Big)=\int_{[0,\,u]} W(u-y){\rm d}y^{\alpha k}\\+k\int_{[0,\,u]} W(u-y){\rm d}y^{\alpha k}=(k+1)\int_{[0,\,u]} W(u-y){\rm d}y^{\alpha k}=W_{k+1}(u).
\end{multline*}
thereby completing the proof of \eqref{eq:findim2}.

Now we pass to discussing tightness. Fix any $T>0$. The processes $$\Big(\Big(\frac{X_{k+1}(ut_n)-b_{k+1}(ut_n)}{a(t_n)b_k(t_n)}\Big)_{u\geq 0}\Big)_{n\geq 1}$$ have the same distributions as $$\Big(\Big(\int_{[0,\,u]}\hat X_{1,\,t_n}(u-y){\rm d}\Big(\frac{X^\ast_{k,\,t_n}(y)}{b_k(t_n)}\Big)+\frac{b_{k-1}(t_n)b(t_n)}{b_k(t_n)}\int_{[0,\,u]}\hat X_{k,\,t_n}(u-y){\rm d}_y\Big(\frac{b(yt_n)}{b(t_n)}\Big)\Big)_{u\geq 0}\Big)_{n\geq 1}.$$ Further, 
\begin{multline*}
\sup_{u\in [0,\,T]}\Big|\int_{[0,\,u]}(\hat X_{1,\,t_n}(u-y)-\hat W_1(u-y)){\rm d}\Big(\frac{X^\ast_{k,\,t_n}(y)}{b_k(t_n)}\Big)\Big|\\\leq \sup_{u\in [0,\,T]}|\hat X_{1,\,t_n}(u)-\hat W_1(u)|\frac{X^\ast_{k,\,t_n}(T)}{b_k(t_n)}~\to~0,\quad n\to\infty\quad\text{a.s.}
\end{multline*}
in view of \eqref{eq:cont} and $\lim_{n\to\infty} X^\ast_{k,\,t_n}(T)/b_k(t_n)=T^{\alpha k}$ a.s. Analogously, $$\lim_{n\to\infty}\sup_{u\in [0,\,T]}\Big|\int_{[0,\,u]}(\hat X_{k,\,t_n}(u-y)-\hat W_k(u-y)){\rm d}\Big(\frac{b(yt_n)}{b(t_n)}\Big)\Big|=0 \quad\text{a.s.}$$ Therefore, it is enough to prove tightness in $D([0,T])$ of the distributions of $$\Big(\Big(\int_{[0,\,u]}\hat W_1(u-y){\rm d}\Big(\frac{X^\ast_{k,\,t_n}(y)}{b_k(t_n)}\Big)\Big)_{u\geq 0}\Big)_{n\geq 1}\quad\text{and}\quad \Big(\Big(\int_{[0,\,u]}\hat W_k(u-y){\rm d}_y\Big(\frac{b(yt_n)}{b(t_n)}\Big)\Big)_{u\geq 0}\Big)_{n\geq 1}.$$ By Corollary on p.~142 in \cite{Billingsley:1999}, it suffices to show that, for any positive $\theta_1$ and $\theta_2$, there exist $\delta>0$ and integer $n_0\geq 1$ such that $$\mmp\Big\{\sup_{0\leq u,v\leq T,\,|u-v|\leq \delta}\Big|\int_{[0,\,u]}\hat W_1(u-y){\rm d}X^\ast_{k,\,t_n}(y)-\int_{[0,\,v]}\hat W_1(v-y){\rm d}X^\ast_{k,\,t_n}(y)\Big|>\theta_1 b_k(t_n)\Big\}\leq \theta_2$$ for all $n\geq n_0$, and that, for any positive $\theta_3$ and $\theta_4$, there exist $\rho>0$ and integer $n_1>0$ such that
\begin{equation}\label{eq:tight}
\mmp\Big\{\sup_{0\leq u,v\leq T,\,|u-v|\leq\rho}\Big|\int_{[0,\,u]}\hat W_k(u-y){\rm d}_yb(yt_n)-\int_{[0,\,v]}\hat W_k(v-y){\rm d}_yb(yt_n)\Big|>\theta_3 b(t_n)\Big\}\leq \theta_4
\end{equation}
for all $n\geq n_1$.

We only prove \eqref{eq:tight}, the proof of the inequality involving $\hat W_1$ being similar. The process $\hat W_k$ is a.s.\ locally H\"{o}lder continuous with exponent $\lambda:=\min(1, \gamma+\alpha(k-1))$, that is, $|\hat W_k(x)-\hat W_k(y)|\leq \hat V_{k,\,T}|x - y|^\lambda$ for $x,y\in [0,T]$ and some a.s.\ finite positive random variable $\hat V_{k,\,T}$. Putting $\hat W_k(t)=0$ for $t<0$ we show that the last inequality actually holds for $x,y\in (-\infty, T]$. A proof is only needed for the case $\min(x,y)\leq 0<\max(x,y)$. Here it is: $$|\hat W_k(x)-\hat W_k(y)|=|\hat W_k(\max(x,y))|\leq \hat V_{k,\,T}(\max(x,y))^\lambda\leq \hat V_{k,\,T}|x-y|^\lambda.$$
Let $0\leq v<u\leq T$ and $u-v\leq \rho$ for some $\rho>0$. Then
\begin{multline*}
\Big|\int_{[0,\,u]}\hat W_k(u-y){\rm d}_y\Big(\frac{b(yt_n)}{b(t_n)}\Big)-\int_{[0,\,v]}\hat W_k(v-y){\rm d}_y\Big(\frac{b(yt_n)}{b(t_n)}\Big)\Big|\\\leq \int_{[0,\,u]}|\hat W_k(u-y)-\hat W_k(v-y)|{\rm d}_y\Big(\frac{b(yt_n)}{b(t_n)}\Big)\leq \hat V_{k,\,T}(u-v)^\lambda\frac{b(Tt_n)}{b(t_n)}\leq 2\hat V_{k,\,T}\rho^\lambda T^\alpha
\end{multline*}
for $n\geq n_1$ and $n_1$ large enough. This proves \eqref{eq:tight} in the case $0\leq v<u\leq T$. The complementary case $0\leq u<v\leq T$ can be investigated analogously.

The proof of Theorem \ref{thm:main1} is complete.

\subsection{Proof of Theorem \ref{thm:main2}}

This proof is much simpler than the previous one. We only prove the distributional convergence of each coordinate.

We use mathematical induction. Assume that we have already proved that $$\Big(\frac{X_k(ut)}{(c(t))^k}\Big)_{u\geq 0}~\Rightarrow~ (Z^{\ast(k)}(u))_{u\geq 0},\quad t\to\infty$$ on $D$. Since $Z$ is a.s.\ continuous, so is $Z^{\ast (k+1)}$. The convergence of nondecreasing functions to a continuous limit is locally uniform. Therefore, it is enough to prove the weak convergence of finite-dimensional distributions. According to the Cram\'{e}r-Wold device, the latter is equivalent to the distributional convergence
\begin{equation}\label{eq:findim}
\sum_{i=1}^m \lambda_i \frac{X_{k+1}(u_it)}{(c(t))^{k+1}}=\sum_{i=1}^m \lambda_i \int_{[0,\,u_i]}\frac{X_k((u_i-y)t)}{(c(t))^k}{\rm d}_y\Big(\frac{X(yt)}{c(t)}\Big)~\dod~\sum_{i=1}^m \lambda_i Z^{\ast(k+1)}(u_i),\quad t\to\infty
\end{equation}
holding for all $m\geq 1$, any $0\leq u_1<\ldots<u_m<\infty$ and any $\lambda_1,\ldots$ $\lambda_m\geq 0$.

Now we prove \eqref{eq:findim}. Let $(t_n)_{n\geq 1}$ be any sequence of positive numbers satisfying $\lim_{n\to\infty}t_n=\infty$. By the Skorokhod representation theorem, for each $n\geq 1$, there exists $\hat X_{k,\,t_n}$, a version of $(X_k(ut_n)/(c(t_n))^k)_{u\geq 0}$, and $\hat Z_k$, a version of $Z^{\ast(k)}$, such that $\lim_{n\to\infty}\hat X_{k,\,t_n}=\hat Z_k$ a.s.\ on $D$. Also, on the same probability space, for each $n\geq 1$, there exists $\hat X_{1,\,t_n}$, a version of $(X(ut_n)/c(t_n))_{u\geq 0}$ and $\hat Z_1$, a version of $Z$, such that the random measures $\nu_{1,\,t_n}$ defined by $\nu_{1,\,t_n}([0,\,y])=\hat X_{1,\,t_n}(y)$ for $y>0$ converge weakly a.s.\ to the continuous measures $\nu_1$ given by $\nu_1([0,\,y])=\hat Z_1(y)$ for $y>0$. Thus, by Lemma \ref{lem:reg},
\begin{equation*}
\lim_{n\to\infty} \sum_{i=1}^m \lambda_i \int_{[0,\,u_i]}\hat X_{k,\,t_n}(u_i-y){\rm d}_y \hat X_{1,\,t_n}(y)=\sum_{i=1}^m \int_{[0,\,u_i]}\hat Z_k(u_i-y){\rm d}\hat Z_1(y) \quad\text{a.s.}
\end{equation*}
Since the diverging sequence $(t_n)_{n\geq 1}$ is arbitrary, this entails
\begin{multline*}
\sum_{i=1}^m \lambda_i \int_{[0,\,u_i]}\frac{X_k((u_i-y)t)}{(c(t))^k}{\rm d}_y\Big(\frac{X(yt)}{c(t)}\Big)~\dod~\sum_{i=1}^m \lambda_i\int_{[0,\,u_i]} Z^{\ast(k)}(u_i-y){\rm d}Z(y)\\=\sum_{i=1}^m \lambda_i Z^{\ast(k+1)}(u_i),\quad t\to\infty.
\end{multline*}
Thus, \eqref{eq:findim} does indeed hold.

The proof of Theorem \ref{thm:main2} is complete.

\section{Appendix}

Here we collect a few auxiliary facts. The first of these can be found, for instance, in Lemma A.5 of \cite{Iksanov:2013}.
\begin{lemma}\label{lem:conv}
Let $0\leq c<d<\infty$. Assume that $\lim_{t\to\infty}x_t = x$ on $D$ and that, as $t\to\infty$, finite measures $\nu_t$ converge weakly on $[c, d]$ to a finite continuous measure $\nu$. Then $$\lim_{t\to\infty}\int_{[c,\,d]}
x_t(y){\rm d}\nu_t(y)=\int_{[c,\,d]}x(y){\rm d}\nu(y).$$
\end{lemma}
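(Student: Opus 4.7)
The plan is to reduce Lemma~\ref{lem:conv} to (i) a uniform estimate coming from the $J_1$ time-change representation and (ii) a generalized portmanteau argument that exploits the fact that $x$ is continuous at $\nu$-almost every point. Fix $T>d$ at which $x$ is continuous (possible since $x$ has at most countably many jumps), so that the $J_1$ convergence $x_t\to x$ supplies continuous strictly increasing bijections $\lambda_t:[0,T]\to[0,T]$ with $\|\lambda_t-\mathrm{id}\|_{[0,T]}\to 0$ and $\|x_t\circ\lambda_t-x\|_{[0,T]}\to 0$. I would then decompose
\begin{equation*}
\int_{[c,d]} x_t\,\mathrm{d}\nu_t - \int_{[c,d]} x\,\mathrm{d}\nu
= \int_{[c,d]} (x_t - x\circ\lambda_t^{-1})\,\mathrm{d}\nu_t
 + \int_{[c,d]} (x\circ\lambda_t^{-1} - x)\,\mathrm{d}\nu_t
 + \Big(\int_{[c,d]} x\,\mathrm{d}\nu_t - \int_{[c,d]} x\,\mathrm{d}\nu\Big).
\end{equation*}

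The first summand vanishes because, after the substitution $s=\lambda_t^{-1}(y)$, its integrand is dominated in absolute value by $\|x_t\circ\lambda_t-x\|_{[0,T]}\to 0$, while $\nu_t([c,d])\to\nu([c,d])<\infty$. The third summand vanishes by the portmanteau theorem applied to bounded Borel functions whose discontinuity set is $\nu$-null: indeed, $x$ is bounded on $[c,d]$ with at most countable discontinuity set $D_x$, and the continuity of $\nu$ forces $\nu(D_x)=0$, so $\int x\,\mathrm{d}\nu_t\to\int x\,\mathrm{d}\nu$.

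The middle summand is the main obstacle, because pointwise $x\circ\lambda_t^{-1}\to x$ at $\nu$-a.e.\ point, yet the integration is against the moving measure $\nu_t$, so dominated convergence does not apply directly. My proposal is to approximate $x$ in $L^1(\nu)$: given $\varepsilon>0$, choose $\phi$ continuous on a slight enlargement of $[c,d]$ with $\int_{[c,d]}|x-\phi|\,\mathrm{d}\nu<\varepsilon$, and split
\begin{equation*}
\int_{[c,d]}(x\circ\lambda_t^{-1} - x)\,\mathrm{d}\nu_t
= \int_{[c,d]}(\phi\circ\lambda_t^{-1} - \phi)\,\mathrm{d}\nu_t
 + \int_{[c,d]}(x-\phi)\circ\lambda_t^{-1}\,\mathrm{d}\nu_t
 - \int_{[c,d]}(x-\phi)\,\mathrm{d}\nu_t.
\end{equation*}
The first piece tends to $0$ by uniform continuity of $\phi$ on a compact interval, $\|\lambda_t^{-1}-\mathrm{id}\|\to 0$, and boundedness of $\nu_t([c,d])$. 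The second piece, after the change of variables $s=\lambda_t^{-1}(y)$, is the integral of $x-\phi$ against the pushforward $\tilde\nu_t:=\nu_t\circ\lambda_t$; since $\tilde\nu_t\Rightarrow\nu$ (see below), portmanteau applied to the bounded, $\nu$-a.e.\ continuous function $|x-\phi|$ bounds its limit in absolute value by $\int|x-\phi|\,\mathrm{d}\nu<\varepsilon$. The third piece converges to $\int(x-\phi)\,\mathrm{d}\nu$ directly by portmanteau, again bounded by $\varepsilon$. Sending $\varepsilon\downarrow 0$ closes the argument. The delicate technical ingredient is the weak convergence $\tilde\nu_t\Rightarrow\nu$, which I would justify by upgrading $\nu_t\Rightarrow\nu$ to uniform convergence of distribution functions (P\'olya's theorem, available because the atomless $\nu$ has continuous distribution function) and then combining this with $\|\lambda_t-\mathrm{id}\|\to 0$.
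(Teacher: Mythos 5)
The paper does not actually prove Lemma~\ref{lem:conv}: it imports the statement from Lemma~A.5 of \cite{Iksanov:2013} without argument, so there is no in-paper proof to compare yours against. Judged on its own merits, your proof is correct and self-contained. The three-term decomposition via the $J_1$ time change $\lambda_t$ isolates exactly the three difficulties: the first summand is killed by the uniform bound $\|x_t\circ\lambda_t-x\|_{[0,T]}\to 0$ together with the boundedness of $\nu_t([c,d])$ (no substitution is even needed there --- it is a pointwise bound on the integrand); the third summand follows from the extended portmanteau theorem for bounded measurable integrands with $\nu$-null discontinuity set, which applies because $x$ has at most countably many jumps and $\nu$ is atomless; and your treatment of the middle summand --- an $L^1(\nu)$ approximation of $x$ by a continuous $\phi$ combined with the weak convergence $\tilde\nu_t\Rightarrow\nu$ of the pushforwards, itself deduced from P\'olya-type uniform convergence of the distribution functions and $\|\lambda_t-\mathrm{id}\|\to 0$ --- correctly circumvents the genuine obstacle that dominated convergence is unavailable against a moving measure. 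The only steps that deserve an extra line are routine boundary effects: after the change of variables the domain of integration becomes $[\lambda_t^{-1}(c),\lambda_t^{-1}(d)]$ rather than $[c,d]$, and $\lambda_t$ need not map $[c,d]$ into itself; since $\nu$ is continuous and $\|\lambda_t^{\pm 1}-\mathrm{id}\|\to 0$, the symmetric difference carries $\tilde\nu_t$-mass $o(1)$ while the integrands are bounded, so these discrepancies are harmless, but they should be acknowledged. In short: no gap, a complete proof of a fact the paper only cites, somewhat longer than strictly necessary but with every nontrivial point correctly identified.
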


\begin{lemma}\label{lem:reg}
Let $f$ be a nondecreasing right-continuous function which varies regularly at $\infty$ of index $\alpha>0$. Then, for $j\geq 2$,
\begin{equation}\label{eq:2}
\lim_{t\to\infty}\frac{f^{\ast(j)}(t)}{(f(t))^j}=\frac{(\Gamma(1+\alpha))^j}{\Gamma(1+\alpha j)},
\end{equation}
where $\Gamma$ is the Euler gamma-function. In particular, $f^{\ast (j)}$ is regularly varying at $\infty$ of index $\alpha j$.
\end{lemma}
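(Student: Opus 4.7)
\medskip

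\noindent\textbf{Proof proposal for Lemma \ref{lem:reg}.} The plan is to argue by induction on $j\geq 2$, reducing at each step to an application of Lemma \ref{lem:conv} after a change of variables. The regular variation of $f^{\ast(j)}$ with index $\alpha j$ is not a separate fact but a direct consequence of the asymptotic formula \eqref{eq:2} combined with the regular variation of $f$; once \eqref{eq:2} is proved for a given $j$, regular variation of $f^{\ast(j)}$ is automatic.

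For the base case $j=2$, I change variables via $y=ts$ and write
$$\frac{f^{\ast(2)}(t)}{(f(t))^2}=\int_{[0,\,1]}\frac{f(t(1-s))}{f(t)}\,{\rm d}_s\Big(\frac{f(ts)}{f(t)}\Big).$$
The finite measures $\nu_t$ on $[0,1]$ defined by $\nu_t([0,s])=f(ts)/f(t)$ converge weakly to the continuous measure $\nu$ with $\nu([0,s])=s^\alpha$: this is the defining property of regular variation, and since the limit has no atom at $0$ while $f(0)/f(t)\to 0$, there is no issue at the left endpoint. The integrand $s\mapsto f(t(1-s))/f(t)$ is monotone in $s$, converges pointwise on $[0,1]$ to the continuous function $s\mapsto (1-s)^\alpha$, and hence (by Polya's theorem on monotone convergence to a continuous limit) converges uniformly on $[0,1]$, which in particular gives convergence in the $J_1$-topology on $D([0,1])$. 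Lemma \ref{lem:conv} then yields
$$\lim_{t\to\infty}\frac{f^{\ast(2)}(t)}{(f(t))^2}=\int_0^1 (1-s)^\alpha\,{\rm d}s^\alpha=\alpha B(\alpha,1+\alpha)=\frac{(\Gamma(1+\alpha))^2}{\Gamma(1+2\alpha)},$$
settling the base case.

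For the inductive step, suppose \eqref{eq:2} holds for a given $j\geq 2$, so that $f^{\ast(j)}$ is regularly varying at $\infty$ of index $\alpha j$. Writing $f^{\ast(j+1)}(t)=\int_{[0,\,t]}f(t-y)\,{\rm d}f^{\ast(j)}(y)$ and performing the same substitution $y=ts$, I obtain
$$\frac{f^{\ast(j+1)}(t)}{f(t)\,f^{\ast(j)}(t)}=\int_{[0,\,1]}\frac{f(t(1-s))}{f(t)}\,{\rm d}_s\Big(\frac{f^{\ast(j)}(ts)}{f^{\ast(j)}(t)}\Big),$$
and the same Polya/Lemma \ref{lem:conv} argument — now with the limit measure $s^{\alpha j}$ replacing $s^\alpha$ — gives
$$\lim_{t\to\infty}\frac{f^{\ast(j+1)}(t)}{f(t)\,f^{\ast(j)}(t)}=\int_0^1(1-s)^\alpha\,{\rm d}s^{\alpha j}=\alpha j\,B(\alpha j,1+\alpha)=\frac{\Gamma(1+\alpha)\,\Gamma(1+\alpha j)}{\Gamma(1+\alpha(j+1))}.$$
Multiplying by the inductive-hypothesis asymptotic $f^{\ast(j)}(t)\sim \frac{(\Gamma(1+\alpha))^j}{\Gamma(1+\alpha j)}(f(t))^j$ yields \eqref{eq:2} with $j+1$ in place of $j$, completing the induction.

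The main obstacle I anticipate is the clean justification that the measures $f^{\ast(j)}(t\,\cdot)/f^{\ast(j)}(t)$ converge weakly on $[0,1]$ to a continuous measure and that the integrand converges in $D([0,1])$: both require handling the endpoint $s=0$ (where $f(0)/f(t)\to 0$ thanks to $f(t)\to\infty$) and the endpoint $s=1$ (continuity of the limits at these points forces uniform convergence by the monotone-limit version of Polya's theorem). Once these two convergences are in place, Lemma \ref{lem:conv} does the rest, and the Beta-function computation is routine.
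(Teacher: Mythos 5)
Your proof is correct and follows essentially the same route as the paper: induction on $j$, weak convergence of the rescaled Stieltjes measures to a continuous limit, locally uniform (Polya-type) convergence of the monotone integrand, Lemma \ref{lem:conv}, and the Beta-integral evaluation. The only cosmetic difference is that you integrate $f(t(1-s))/f(t)$ against ${\rm d}_s\bigl(f^{\ast(j)}(ts)/f^{\ast(j)}(t)\bigr)$ whereas the paper swaps the roles of integrand and integrator (and starts the induction at $j=1$ rather than treating $j=2$ separately), which is immaterial by commutativity of the convolution.
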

\begin{proof}
This can be checked either directly or with the aid of Laplace-Stieltjes transforms and Tauberian theorems.

We give an even simpler proof, which is based on an application of Lemma \ref{lem:conv} and mathematical induction. Assume that relation \eqref{eq:2} holds with $j=k$. Since the convergence of monotone functions to a continuous limit is locally uniform, we conclude that, for all $T>0$, $\lim_{t\to\infty}\sup_{y\in [0,\,T]}|f^{\ast(k)}(yt)/f^{\ast(k)}(t)-y^{\alpha k}|=0$. For each $t>0$, the measure $\nu_t$ defined by $\nu_t([0,\,y])=f(yt)/f(t)$ for $y>0$ converges weakly to the finite continuous measure $\nu$ given by $\nu([0,\,y])=y^\alpha$. Hence, by Lemma \ref{lem:conv}, $$\frac{f^{\ast(k+1)}(t)}{f^{\ast(k)}(t)f(t)}=\frac{\int_{[0,\,1]}f^{\ast(k)}((1-y)t){\rm d}_y f(yt)}{f^{\ast(k)}(t)f(t)}=\alpha\int_0^1 (1-y)^{\alpha k}y^{\alpha-1}{\rm d}y=\frac{\Gamma(1+\alpha)\Gamma(1+\alpha k)}{\Gamma(1+\alpha(k+1))}.$$ An application of \eqref{eq:2} with $j=k$ demonstrates that relation \eqref{eq:2} also holds with $j=k+1$. This completes the proof.
\end{proof}

\begin{lemma}\label{lem:exp}
Let $f$ be a nondecreasing right-continuous function which satisfies $$f(t)~\sim~A\eee^{\beta t}t^\alpha\ell(t),\quad t\to\infty,$$ where $A, \alpha$ and $\beta$ are positive constants and $\ell$ is a function slowly varying at $\infty$. Then, for $j\geq 2$,
\begin{equation}\label{eq:23}
f^{\ast(j)}(t)~\sim~\frac{(\Gamma(1+\alpha))^j}{\Gamma((1+\alpha)j)}\beta^{j-1} A^j \eee^{\beta t} t^{(1+\alpha)j-1}(\ell(t))^j,\quad t\to\infty,
\end{equation}
where $\Gamma$ is the Euler gamma-function.
\end{lemma}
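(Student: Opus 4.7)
The plan is to argue by induction on $j$, the base case $j=1$ being the hypothesis itself. Suppose the conclusion holds for $j=k$, so $f^{\ast(k)}(t)\sim C_k\eee^{\beta t}t^{(1+\alpha)k-1}\ell(t)^k$ with $C_k:=(\Gamma(1+\alpha))^k\beta^{k-1}A^k/\Gamma((1+\alpha)k)$; the aim is to transfer this asymptotic to $f^{\ast(k+1)}(t)=\int_{[0,\,t]}f(t-y)\,{\rm d}f^{\ast(k)}(y)$.

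The key device is to tilt away the exponential factor. Put $F(u):=\eee^{-\beta u}f(u)$, so $F(u)\sim Au^\alpha\ell(u)$ and, since $f$ is nondecreasing, $F$ is locally bounded. Introduce the nondecreasing function $M_k(t):=\int_{[0,\,t]}\eee^{-\beta y}\,{\rm d}f^{\ast(k)}(y)$. Integration by parts, together with the inductive hypothesis and Karamata's theorem for integrals of regularly varying functions (Theorem 1.5.11 in \cite{BGT}), yields
$$M_k(t)=\eee^{-\beta t}f^{\ast(k)}(t)-f^{\ast(k)}(0)+\beta\int_0^t \eee^{-\beta y}f^{\ast(k)}(y)\,{\rm d}y\sim\frac{\beta^k A^k(\Gamma(1+\alpha))^k}{\Gamma((1+\alpha)k+1)}t^{(1+\alpha)k}\ell(t)^k,$$
so $M_k$ is regularly varying at $\infty$ of index $(1+\alpha)k>0$. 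Rewriting ${\rm d}f^{\ast(k)}(y)=\eee^{\beta y}\,{\rm d}M_k(y)$, one obtains
$$f^{\ast(k+1)}(t)=\eee^{\beta t}\int_{[0,\,t]}F(t-y)\,{\rm d}M_k(y).$$

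To evaluate this integral I would mimic the proof of Lemma \ref{lem:reg}. Substituting $y=ts$, the measures $\nu_t$ on $[0,1]$ with $\nu_t([0,\,s])=M_k(ts)/M_k(t)$ converge weakly to the continuous measure $s\mapsto s^{(1+\alpha)k}$ (by regular variation of $M_k$), while the uniform convergence theorem for regularly varying functions gives $F(t(1-s))/(At^\alpha\ell(t))\to(1-s)^\alpha$ uniformly on each compact subinterval of $[0,1)$. Applying Lemma \ref{lem:conv} on $[0,\,1-\delta]$ and letting $\delta\downarrow 0$ after controlling the boundary pieces gives
$$\int_{[0,\,t]}F(t-y)\,{\rm d}M_k(y)\sim At^\alpha\ell(t)M_k(t)\int_0^1(1-s)^\alpha\,{\rm d}s^{(1+\alpha)k}=At^\alpha\ell(t)M_k(t)\cdot\frac{\Gamma(1+\alpha)\Gamma((1+\alpha)k+1)}{\Gamma((1+\alpha)(k+1))}.$$
Plugging in the asymptotic of $M_k(t)$ telescopes the gamma factors to $C_{k+1}t^{(1+\alpha)(k+1)-1}\ell(t)^{k+1}$, which closes the induction.

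The hardest part will be the residual near $s=1$: the measure $M_k$ places most of its mass at the right endpoint while $F(t(1-s))$ becomes small there, and the asymptotic hinges on extracting the balance. The required estimate $\sup_{u\in[0,\,t\delta]}F(u)=O((t\delta)^\alpha\ell(t\delta))$ follows from $F(u)\sim Au^\alpha\ell(u)$, local boundedness of $F$, and Potter's inequalities, so the near-endpoint contribution is $O(\delta^\alpha)$ times the main term and vanishes as $\delta\downarrow 0$. The residual near $s=0$ is easier, since $\nu_t([0,\,\delta])\to\delta^{(1+\alpha)k}$ is small.
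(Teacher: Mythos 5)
Your proof is correct and follows essentially the same route as the paper's: induction combined with an exponential tilt, Karamata's theorem for the tilted cumulative function, and weak convergence of the rescaled measures via Lemma \ref{lem:conv}, yielding the same Beta integral; you merely interchange the roles of the two convolution factors (tilting $f^{\ast(k)}$ into the measure $M_k$ and keeping $F=\eee^{-\beta\,\cdot}f(\cdot)$ as the integrand, whereas the paper uses $\hat f_\beta$ as the integrating measure and $f_{\beta,\,k}$ as the integrand). Your explicit Potter-bound control of the contribution near the endpoint $s=1$ is a welcome detail that the paper only gestures at.
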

\begin{proof}
We use mathematical induction. Assume that relation \eqref{eq:23} holds true with $j=k$. Put $f_{\beta,\,k}(t):=\eee^{-\beta t}f^{\ast(k)}(t)$ and $\hat f_\beta(t):=\int_{[0,\,t]}\eee^{-\beta y}{\rm d}f(y)$ for $t\geq 0$. The induction assumption is equivalent to $$f_{\beta,\,k}(t)~\sim~\frac{(\Gamma(1+\alpha))^k}{\Gamma((1+\alpha)k)}\beta^{k-1} A^k t^{(1+\alpha)k-1}(\ell(t))^k,\quad t\to\infty.$$ Integrating by parts and using Karamata's theorem (Proposition 1.5.8 in \cite{BGT}) we conclude that $$\hat f_\beta(t)\sim \frac{\beta A}{1+\alpha}t^{1+\alpha}\ell(t),\quad t\to\infty.$$ An application of Lemma \ref{lem:conv} (justification of its applicability is analogous to that in the proof of Lemma \ref{lem:reg}) yields
\begin{multline*}
\eee^{-\beta t}\frac{f^{\ast(k+1)}(t)}{f_{\beta,\,k}(t)\hat f_\beta(t)}=\eee^{-\beta t}\frac{\int_{[0,\,1]}f_{\beta,\,k}((1-y)t){\rm d}_y \hat f_\beta(yt)}{f_{\beta,\,k}(t)\hat f_\beta(t)}~\to~(1+\alpha)\int_0^1 (1-y)^{(1+\alpha)k-1} y^\alpha{\rm d}y\\=(1+\alpha)\frac{\Gamma((1+\alpha)k)\Gamma(1+\alpha)}{\Gamma((1+\alpha)(k+1))},\quad t\to\infty
\end{multline*}
thereby showing that relation \eqref{eq:23} holds with $j=k+1$.
\end{proof}

\vskip0.5cm
\noindent
{\bf Acknowledgement}. The work of Wissem Jedidi was supported by
the Ongoing Research Funding program, (ORF-2025-162), King Saud University, Riyadh, Saudi Arabia.

\end{document}